\newtheorem{theorem}{Theorem}
\newtheorem*{introthm*}{Main Theorem} 
\newtheorem{introthm}[theorem]{Theorem} 
\newtheorem{thm}{Theorem}[section]
\newtheorem{prop}[thm]{Proposition}
\newtheorem{proposition}[thm]{Proposition}
\newtheorem{lemma}[thm]{Lemma}
\newtheorem{cor}[thm]{Corollary}
\theoremstyle{definition}
\newtheorem{definition}[thm]{Definition}
\newtheorem{remark}[thm]{Remark}
\newtheorem{example}[thm]{Example}
\newtheorem{remarkQuestion}[thm]{Remark and Question}
\newcommand{\CC}{\mathbf{C}}
\newcommand{\m}{\mathfrak{m}}
\newcommand{\OO}{\mathscr{O}}
\newcommand{\tA}{\tilde{A}}
\newcommand{\cO}{\mathcal{O}}
\newcommand{\II}{\mathscr{I}}
\DeclareMathOperator{\lct}{lct}
\DeclareMathOperator{\Spec}{Spec}
\DeclareMathOperator{\Bl}{Bl}
\DeclareMathOperator{\codim}{codim}
\DeclareMathOperator{\Hom}{Hom}
\DeclareMathOperator{\ord}{ord}
\DeclareMathOperator{\Exc}{Exc}
\title{Log canonical thresholds of generic links of generic determinantal varieties}
\author{Youngsu Kim}
\address{Department of Mathematics, California State University San Bernardino, San Bernardino, CA 92407}
\email{youngsu.kim@csusb.edu}
\author{Lance Edward Miller}
\address{Mathematical Sciences, University of Arkansas, Fayetteville, AR 72701}
\email{lem016@uark.edu}
\author{Wenbo Niu}
\address{Mathematical Sciences, University of Arkansas, Fayetteville, AR 72701}
\email{wenboniu@uark.edu}
\subjclass[2010]{Primary, 14J17,14M06,13C40} 
\begin{document}

\maketitle

\date{\today}
\begin{abstract}
The article concerns the behavior of determinantal varieties under generic linkage. In particular, it was shown that one has a general inequality \cite{Niu14} of log canonical thresholds on passing to generic linkage. It is immediate to verify this can be strict. 
Except a few special classes, it is not known under which conditions force the equality or strict inequality to occur.  
We demonstrate determinental varieties constitute a class of varieties for which equality is attained.
\end{abstract}


\section{Introduction}
The log canonical threshold of a complex algebraic variety $X$ is a numerical invariant measuring singularities of $X$. 
Let $A$ be an affine or projctive space over $\CC$. 
Suppose that $X$ is a reduced equidimensional subscheme of $A$.  
Consider the generic link $Y$ of $X$, which is defined in an affine extension $A'$ of $A$, 
via a generic complete intersection $V$ of the equations defining $X$, see \cref{secDefNota} for the precise definition. 
The work of the third author \cite[Prop.\ 3.7]{Niu14} gives the following relationship on the log canonical thresholds of $X$, $Y$, and $V$, 
\begin{equation}\label{eqEq}
\lct(A',Y) \geq \lct(A',V) = \lct(A, X).
\end{equation}

In general, inequality in \Cref{eqEq} can be strict. For instance, the log canonical threshold of the generic link of a hypersurface is always $1$, whereas there are hypersurfaces whose log canonical threshold is strictly less than $1$. 
In \Cref{xmp:codim2}, we provide examples demonstrating a wild behavior of the log canonical thresholds of generic links in the codimension $2$ case. 

To the best of our knowledge, there is no known non-trivial classe of varieties where equality in \Cref{eqEq} is proved to hold. 
Determinantal varieties and ideals are classical objects in algebraic geometry and commutative algebra. 
The goal of this article is to prove that in \Cref{eqEq} one has equality throughout  when $X$ is a generic determinantal variety.

\begin{introthm}\label{mainthm1}
Let $X$ be a generic determinantal variety and $Y$ its generic link. 
Then $X$ and $Y$ have the same log canonical threshold. 
\end{introthm}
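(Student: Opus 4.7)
Since \Cref{eqEq} already supplies $\lct(A',Y) \ge \lct(A,X)$, the work lies in proving the reverse inequality $\lct(A',Y) \le \lct(A,X)$. Because the log canonical threshold is a local invariant, it suffices to exhibit a point $p \in Y$ at which $\lct_p(A',Y) \le \lct(A,X)$.

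The first step is to invoke the explicit formula, due to Docampo (confirming a conjecture of Johnson), for the log canonical threshold of a generic determinantal variety $X = X_r \subset A$ cut out by the $r \times r$ minors of an $m \times n$ matrix of indeterminates:
\[
\lct(A, X_r) \;=\; \min_{0 \le i \le r-1} \frac{(m-i)(n-i)}{r-i}.
\]
This is the numerical target that $\lct(A',Y)$ must match from above.

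The second step is to take $p$ to be the origin of $A'$ and analyze the local structure of $Y$ there. Scheme-theoretically $V = X \cup Y$ with $V$ a complete intersection, and $V$ is smooth away from $X$; hence any singular behavior of $Y$ that could reduce $\lct(A',Y)$ below the codimension concentrates on $X \cap Y$, whose deepest stratum contains the origin. To bound $\lct_0(A',Y)$ from above, my plan is to invoke the contact-loci characterization of Ein--Lazarsfeld--Musta\c{t}\u{a},
\[
\lct_0(A',Y) \;=\; \inf_{k \ge 1} \frac{\dim A' - \dim \Cont^k(Y)_0}{k},
\]
and to produce contact loci for $Y$ at the origin of the required size by transferring contact loci from $X$ via the defining relation $(I_V : I_X)\cdot \cO_{A'} = I_Y$. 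Docampo's computation already exhibits contact loci on $X$ whose dimensions saturate the formula above; the task is to promote these to contact loci on $Y$ whose ratios $(\dim A' - \dim \Cont^k(Y)_0)/k$ realize the same infimum.

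The main obstacle is this last transfer step: one must control precisely how the colon operation $(I_V : I_X)$ interacts with the rank-stratification that drives the jet-scheme combinatorics for $X$. A natural fallback, should this contact-loci bookkeeping become unwieldy, is to construct a single divisorial valuation over $A'$ that restricts to a valuation over $A$ computing $\lct(A,X)$ and whose log discrepancy on $A'$ yields an upper bound of the same magnitude for $\lct(A',Y)$. Such a valuation should arise from the canonical log resolution of $X_r$ by iterated blowups along the rank loci, extended to $A'$ using the generic nature of $V$.
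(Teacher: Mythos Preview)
Your proposal is a plan rather than a proof, and the gap you yourself flag --- the ``transfer step'' from contact loci of $X$ to contact loci of $Y$ --- is the entire content of the theorem. Nothing in the relation $I_Y = (I_V : I_X)$ gives an a priori comparison between $\Cont^k(X)$ and $\Cont^k(Y)$: jets of order $k$ in $Y$ are arcs $\gamma$ with $\ord_\gamma(I_V:I_X) \ge k$, and controlling a colon ideal along arcs requires knowing something structural about the generators of $I_Y$, which is exactly what you do not yet have. Docampo's computation of $\dim \Cont^k(X_r)$ relies on the rank stratification of matrices, a structure $Y$ simply does not carry, so there is no evident mechanism to ``promote'' his contact loci. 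As written, the primary approach stalls precisely where the difficulty lies.

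Your fallback is in fact what the paper does, but the paper supplies the two ingredients you are missing. First, it identifies the divisorial valuations to use: the exceptional divisors $E_i$ of the iterated rank-locus blowup of \Cref{thm:res}, pulled back to $A'$. Second --- and this is the crux --- it computes $\ord_{E_i'}(Y)$ exactly. The mechanism is not arc-theoretic at all: on each affine chart of $A_{i-1}'$ the strict transform of $X$ is again a generic determinantal ideal $I_{r_i}(M')$, and the strict transform of $Y$ is again a \emph{generic link} of that ideal (this uses \cite[Claim~3.1.2(3)]{Niu14} and the independence result \Cref{lem:algOrd}). One can then read off $\ord_{E_i'}(Y)$ from the generating degrees of a generic link of a determinantal ideal, which are determined via the linkage isomorphism $\omega_{S/I_X}(mn-rc) \cong I_Y/I_V$ together with the known generating degree of the canonical module (\Cref{genDegreeLem}). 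The outcome is $\ord_{E_i'}(Y) = \min\{r_i,\,(n_i-r_i+1)(m_i-r_i)(r_i-1)\}$, and a short case analysis (\Cref{lem:qibound}) shows this equals $r_i = \ord_{E_i}(X)$ at the index $i$ realizing $\lct(A,X)$, except in degenerate cases where $\lct(A,X)=\codim_A X$ already. None of this is visible from the contact-locus side; the missing idea is the canonical-module degree computation from linkage theory.
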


Our approach to prove \Cref{mainthm1} is to utilize a log resolution of the pair $(A,X)$ \cite{Vai84}. 
A log resolution of the pair $(A,X)$ provides a quick and easy way to determine the log canonical threshold of $X$ in $A$.  
Such a log resolution is explicitly described in \cite{Joh03} and \cite{Doc13}. 
In general, a log resolution of $(A,X)$ does not extend to a log resolution of $(A',Y)$. 
Thanks to \Cref{eqEq}, to prove our theorem it suffices to compare the orders of $X$ and $Y$ along the exceptional divisors associated to the log resolution of $X$. 
Indeed, we can relate these numbers explicitly by utilizing facts about generic determinantal varieties and linkage theory, see \Cref{genDegreeLem}. 

\bigskip

{\bf Acknowledgments:} The authors would like to thank Roi Docampo, William Heinzer, William D. Taylor, and Bernd Ulrich for valuable discussions. We also thank the referee for very detailed readings and questions which improved the exposition and mathematics of the work significantly.

\section{Preliminaries}\label{secDefNota}

Throughout, all schemes are quasi-projective over $\CC$, and all ring are commutative with 1. 
For a closed subscheme $Z$ of a scheme $A$, we denote by $\mathcal{I}_{Z/A}$ the ideal sheaf of $Z$ in $A$. 
When the ambient space of $Z$ is clear, we also use the notation $\mathcal{I}_Z$. 
Let $R$ be a ring and $M = ( x_{ij}) $ an $m$ by $n$ matrix of indeterminates. 
We use the notation $R[M]$ to denote the polynomial ring $R[\underline{x_{ij}}] = R[ \, x_{ij} \mid 1 \le i \le m, 1 \le j \le n]$.
In $R[M]$, $I_i (M)$ denotes the ideal generated by $i$ by $i$ minors of $M$, and $I_i (M)$ is $R$ if $i = 0$ and $0$ if $i > \min\{m,n \}$.

\subsection{Linkage and generic linkage}\label{sec:linkage}
Let $A$ be a smooth quasi-projective variety and $X, Y$ equidimensional subschemes in $A$. 

\begin{definition}[{\cite[Prop.\ 1.1]{PS74},\cite[Def.\ 2.1]{HU85}}]\label{def:linkage}
We say that $X$ and $Y$ are {\it linked} via $V$ if $V = X \cup Y$ is a complete intersection such that 
\begin{equation*}
\II_{Y}/\II_{V} \cong \Hom_{\OO_A} ( \OO_X, \OO_V)~\hbox{and}~\II_{X}/\II_{V} \cong \Hom_{\OO_A} ( \OO_Y, \OO_V).
\end{equation*}
In addition, if $X$ and $Y$ do not have any common component, we say $X$ and $Y$ are {\it geometrically linked}. 
\end{definition}
Note that if $X$ and $Y$ are linked via $V$, then $\codim_A X = \codim_{A} Y = \codim_{A} V$. 
Now, we introduce the notion of generic linkage following \cite[Def.\ 2.3]{HU85}.
Let $R$ be a regular ring, $I_X$ an equidimensional ideal of codimension $c$, $A = \Spec R$, and $X = \Spec R/I_X$.
Fix a generating sequence $g_1,\dots, g_\mu$ of $I_X$. 
Form a $c \times \mu$ matrix $T$ of indeterminates. 
In the ring $S = R[T]$, for $1 \le j \le c$, define
\begin{equation*}
f_j := t_{j1} g_1 + \cdots + t_{j\mu} g_{\mu},
\end{equation*}
and set $I_V = (f_1,\dots, f_c)$. 
The ideal $I_V$ is a complete intersection ideal of codimension $c$ in $S$, see \cite{Hoc73}.
The {\it generic link} $Y$ of $X$ is the subscheme of $A' := \Spec S$ defined by $I_Y := I_V :_S I_X S = \{ f \in S \mid f I_XS \subset I_V \}$. We also say, the ideal $I_Y$ is the {\it generic link} of $I_X$ via $I_V$.
A generic link is irreducible and reduced \cite[Thm.\ 2.4 or 2.5]{HU87}. 

\begin{remark}\label{rmkIndepGenLink}
A generic link of $X$ depends on the choice of a generating set for $I_X$. 
We use the notion of equivalence of generic links in \cite[p. 1270]{HU85}.
Let $(R,I)$ denote a pair of a ring and an ideal. 
We say $(R,I)$ and $(R',I')$ are {\it equivalent} if for some sets of indeterminates $T$ and $T'$, 
there exists a ring isomorphism $\phi \colon R[T] \to R'[T']$ such that $\phi(IR[T]) = I' R'[T']$.
By \cite[Prop. 2.4]{HU85}, if $Y$ are $Y'$ are generic links of $X$, then $I_Y$ and $I_{Y'}$ satisfy the equivalence defined above. 
In \Cref{thm:indendenceOfOrder}, we show that this does not create an issue for our computation of the log canonical threshold. 
This is why sometime we refer to $Y$ as {\it the} generic link.
\end{remark}

\subsection{Log canonical threshold}
The log canonical threshold is a measure of singularities of pairs. 
It is a rational number which can be computed from a log resolution. 
We refer the reader to \cite{Kol97, Lar04} for the details of this invariant. 
Let $X \subset A$ be a subvariety of a smooth quasi-projective variety $A$. 

\begin{definition}[{\cite[Thm.\ 0.2]{KM98}}]
A {\it log resolution} of a pair $(A,X)$ is a projective birational morphism $\pi \colon \tA \to A$ such that 
$\tA$ is smooth, $I_X \cO_{\tA}$ is invertible, and $\Exc(\pi) \cup \pi^{-1}(X)$ has simple normal crossing support. 
Here, $\Exc(\pi)$ denotes the set of exceptional divisors of $\pi$. 
\end{definition}

If $\pi \colon \tA \to A$ is a log resolution of a pair $(A,X)$ and $\operatorname{Supp} ( \Exc(\pi) \cup \pi^{-1}(X) ) = \{ E_1, \dots, E_\ell \}$, 
then we may write $I_X \cO_{\tA} = \cO_{\tA}(-G)$ for some divisor $G = \sum a_{i,X} E_i$ and $K_{\tA/A} \allowbreak {:= K_{\tA} - \pi^* K_A} \allowbreak {= \sum k_i E_i}$, where $K_{\tA/A}$ denotes the relative canonical divisor. Such a log resolution provides the following characterization of the log canonical threshold  
\begin{equation}\label{def:lct}
\lct(A,X) = \min_i \left\{ \frac{k_i + 1}{ a_{i,X} }\right\}.
\end{equation}
We use this as our definition of a log canonical threshold. 

As stated in the introduction, for $X$ defined in affine space $A$ with generic link $Y$ in an affine extension $A'$, one has the general inequality \cite[Prop. 3.7]{Niu14}
\begin{equation*}
\lct(A',Y) \geq \lct(A, X).
\end{equation*}

First, we present a few examples regarding this inequality.
Understanding when equality is attained is our primary goal in this note.  
In the introduction, we mentioned that the inequality can be strict for the hypersurface case. 
Our examples below demonstrate that even in the codimension two case both cases can happen and determining when one has $\lct(A',Y) = \lct(A, X)$ is not a simple task.
The underlying difficulty lies in determining $\lct(A',Y)$ as the equations defining $Y$ becomes significantly more complicated than those defining $X$. 
 



\begin{example} \label{xmp:codim2} We deal with codimension two subschemes in an affine space $A$. 
For any subscheme $X$ of $A$, $Y$ denotes the generic link of $X$ in $A'$ following the definition stated after \Cref{def:linkage}. 
In both cases, we use the fact that $$\lct(A,X) \le \lct (A',Y) \le \codim_A Y = \codim_A X = 2,$$ see \cite[Lem. 2.5]{Niu14} or \cite[Ex. 9.2.14 and Prop. 9.2.32(a)]{Lar04} for the second inequality.
In particular, if $\lct(A,X) = 2$, then $\lct(A,X) = \lct(A',Y) = 2$. 
\begin{enumerate}[$(a)$]
\item The case of complete intersections: Suppose $R = \CC[x_1,x_2,x_3]$, $S = R[t_{ij} \mid 1 \le i \le 2, 1 \le j \le 2]$, $A = \Spec R$, and $A' = \Spec S$. 
The subscheme $X$ defined by $x_1^2x_2,x_3^3$ is a complete intersection, and $Y$ is defined by $x_1^2x_2 t_{11} + x_2^3 t_{12}, x_1^2x_2 t_{21} + x_3^3 t_{22}, t_{12}t_{21} - t_{11}t_{22}$ in $A'$. 
All equations in question are binomial.
By the linear programming methods in \cite{ST09}, one verifies that $\lct(A,Y) = 11/6 > 5/6 = \lct(A,X)$.

In contrast, for a generic complete intersection $X$, for instance, $X = V(x_1,x_2)$, one has $\lct(A,X) = 2$.
Thus, $\lct (A,X) = \lct(A',Y)  = 2$. 

\item The case of non-complete intersections: 
Suppose $R = \CC[x_1,x_2,x_3]$, $S = R[t_{ij} \mid 1 \le i \le 2, 1 \le j \le 3]$, $A = \Spec R$, and $A' = \Spec S$. 
Further, suppose that $X$ is defined by $x_1x_2,x_2x_3,x_3x_1$. 
Since these equations are monomials, by \cite{How01}, $\lct(A,X) = 2$. 
Thus, $\lct(A,X) =  \lct(A',Y) =2$. 

However, when $X$ is defined by $x_1^2,x_1x_2,x_2^2$, it is easy to see that $\lct(A,X) = 1$. Notice that $(x_1^2,x_1x_2,x_2^2) = (x_1,x_2)^2$ and use the result in item $(a)$. 
A Macaulay2 computation shows the log canonical threshold of the generic link $Y$ of $X$ is $2$ thus $\lct(A',Y) = 2 > \lct(A,X) = 1$. 
\end{enumerate}
\end{example}


\subsection{Log resolution of a generic determinantal variety}\label{sec:setting} 
Let $R = \CC[M]$, where $M = ( x_{ij}) $ an $m$ by $n$ matrix of indeterminates. Write $A = \Spec R$ and $X = \Spec R/I_r(M)$ for some positive integer $r$. 
Here, $X$ is the variety parameterizing $m$ by $n$ matrices of rank at most $r-1$ with entries in $\CC$.
We assume $m \geq n \geq r$ and set $c = (m-r+1)(n-r+1)$ the codimension of $X$ in $A$. 

\

The log resolution of $(A,X)$ we refer in the paper is described in detail in \cite[Thm. 4.4 and Cor. 4.5]{Joh03}. 
For the convenience of the reader, 
we summarize these results in \Cref{thm:res} and describe the key step in \Cref{rmk:Joh03}. 

\begin{thm}\label{thm:res} 
Let $M$ be an $m$ by $n$ matrix of indeterminates, $A = \Spec \CC[M]$, and $X = \Spec \CC[M]/I_r(M)$. Set $A_0 := A$.
For $i = 1,\ldots, r$, let 
\begin{equation*}
\pi_i : A_i := \Bl_{Z_{i-1}} (A_{i-1}) \to A_{i-1}, 
\end{equation*}
where $Z_{i-1}$ denotes the strict transform of $V(I_i(M))$ in $A_{i-1}$ under the map $\pi_1 \circ \cdots \circ \pi_{i-1}$. 
By setting $\pi :=  \pi_1 \circ \cdots \circ \pi_r$ and  $\tA := A_r$, 
$\pi  \colon \tA  \to A$ 
 is a log resolution of the pair $(A,X)$. 
Let $\operatorname{Supp} ( \Exc(\pi) \cup \pi^{-1}(X) ) = \{ E_1, \dots, E_r \}$, where $E_i$ denotes strict transform of $V(I_i(M))$ in $\tilde{A}$. 
One has
\begin{equation*}
I_X \cO_{\tA} = \cO_{\tA}( - \sum a_{i,X} E_i )~\hbox{and}~K_{\tA/A} = \cO_{\tA} (- \sum k_i E_i)
\end{equation*}
where $a_{i,X}= r - i + 1$ and $k_i = (m-i+1)(n-i+1)-1$ for $i = 1, \dots, r$.
In particular, one has 
\begin{equation}\label{detLCTformula}
\lct(A,X) = \min_{0 \leq t \leq r-1} \frac{(m-t)(n-t)}{r - t}.
\end{equation}
\end{thm}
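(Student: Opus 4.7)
The plan is to follow the iterated blow-up construction of Johnson \cite{Joh03}, essentially verifying three things step by step: (i) each $A_i$ is smooth and the exceptional locus has simple normal crossings; (ii) the coefficients $a_{i,X} = r - i + 1$ are correct; and (iii) the discrepancy coefficients $k_i = (m-i+1)(n-i+1) - 1$ are correct. The formula for $\lct(A,X)$ then drops out of \eqref{def:lct} after reindexing $t = i - 1$.

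For (i), I would proceed by induction on $i$. The singular locus of $V(I_i(M))$ inside $A$ is $V(I_{i-1}(M))$, so the reduced center $Z_{i-1}$ one blows up on step $i$ is the strict transform of $V(I_i(M))$ after $V(I_1(M)), \dots, V(I_{i-1}(M))$ have already been resolved. The key lemma, which is essentially due to Vainsencher \cite{Vai84} and reproven explicitly by Johnson, is that each such $Z_{i-1}$ is smooth in $A_{i-1}$, and the union of the previous strict transforms $E_1, \dots, E_{i-1}$ with $Z_{i-1}$ has SNC support at every point of $Z_{i-1}$. One checks this in local charts around a point of $V(I_i(M))$ by moving a generic rank $i-1$ submatrix to an identity block and using Gauss-like row and column operations to present the remaining entries as a universal $(m-i+1) \times (n-i+1)$ matrix of indeterminates. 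This local toric/linear model is also what makes the SNC property transparent.

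For (ii), I would argue that $a_{i,X}$ equals the order of $I_r(M)$ at the generic point of $V(I_i(M))$ in $A$, since the subsequent smooth blow-ups centered away from the generic point of $E_i$ do not change that order. In the local model above, the entries of the residual $(m-i+1) \times (n-i+1)$ matrix generate the maximal ideal of the regular local ring at the generic point of $V(I_i(M))$, and the $r$-minors of $M$ are, up to unit multiples, the $(r - i + 1)$-minors of that residual matrix. Hence $I_r(M)$ has order exactly $r - i + 1$ there. For (iii), each blow-up of a smooth codimension-$c_i$ center with $c_i = (m-i+1)(n-i+1)$ contributes $c_i - 1$ to the discrepancy along the newly created $E_i$, and previous blow-ups centered in proper subvarieties of $V(I_i(M))$ do not affect the discrepancy at the generic point of $E_i$; this yields $k_i = (m-i+1)(n-i+1) - 1$.

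The main obstacle will be the smoothness and SNC verification in (i): one must carefully track how repeated blow-ups interact with the stratification by rank, and ensure that the charts in which one does the local computation genuinely cover a neighborhood of each $Z_{i-1}$. Once that is in hand, (ii) and (iii) are local computations at a single generic point, and the LCT formula is a routine consequence of \eqref{def:lct}:
\begin{equation*}
\lct(A,X) = \min_{1 \le i \le r} \frac{k_i + 1}{a_{i,X}} = \min_{1 \le i \le r} \frac{(m-i+1)(n-i+1)}{r - i + 1} = \min_{0 \le t \le r-1} \frac{(m-t)(n-t)}{r - t}.
\end{equation*}
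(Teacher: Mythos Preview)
Your proposal is correct and follows essentially the same approach as the paper, which cites Johnson \cite{Joh03} for the full proof and sketches the key step in \Cref{rmk:Joh03}: an explicit affine-chart computation showing that after the first blow-up the strict transform of $V(I_\ell(M))$ is cut out by $I_{\ell-1}(M_1')$ for an $(m-1)\times(n-1)$ matrix of indeterminates $M_1'$, from which smoothness, SNC, and the formulas $a_{i,X}=r-i+1$, $k_i=(m-i+1)(n-i+1)-1$ follow by iteration. Your generic-point phrasing of (ii) and (iii) is a clean equivalent of that chart calculation.
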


We note that $a_{i,X} = \ord_{E_i} (X)$.

\begin{remark}\label{rmk:Joh03}
The key point of the log resolution in \Cref{thm:res} is that each blow-up $\pi_i$ can be covered by an affine chart, 
where each chart looks ``essentially'' the same, 
and that each blow-up can be described iteratively. 
We describe the blow-up maps as in \cite[Sec.\ 4]{Joh03}.
The first blow-up $\pi_1 \colon A_1= \Bl_{V(I_1(M) )} \to A_0$ is covered by the $mn$ affine charts corresponding to the indeterminates $x_{ij}$. 
We denote them by $U_{ab} \cong \Spec \CC[ y_{ij} \mid 1 \le i \le m, 1 \le j \le n]$, where $y_{ij} = {x_{ij}}/{x_{ab}}$ if $(i, j) \neq (a,b)$ and $y_{ab}  = x_{ab}$.
On the affine chart $U_{11}$, the restriction $\pi_1 |_{U_{11}} \colon U_{11} \to A_0$ of $\pi_1$ corresponds to the ring homomorphism
\begin{equation*}
\CC[ x_{ij} \mid 1 \le i \le m, 1 \le j \le n] \to  \CC[ y_{ij} \mid 1 \le i \le m, 1 \le j \le n],
\end{equation*}
where $x_{ij}$ maps to $x_{11} = y_{11}$ if $(i,j) = (1,1)$ and to $y_{11} \cdot y_{ij}$ otherwise.
Note that $y_{11}$ is the local equation of the exceptional divisor $E_1$ on $U_{11}$, 
and in the coordinate ring $\CC[ \underline{y_{ij}} ]$ of $U_{11}$, the matrix $M$ can be written as follows
\begin{equation*}
M
 = \begin{pmatrix}
x_{11} & x_{12} & \cdots & x_{1n} \\
x_{21} & x_{22} & \cdots & x_{2n} \\
\vdots &  & \ddots & \vdots \\
x_{m1} & x_{m2} & \cdots & x_{mn}
\end{pmatrix}
= y_{11} 
\begin{pmatrix}
1 & y_{12}  & \cdots & y_{1n}  \\
y_{21}  & y_{22}  & \cdots & y_{2n}  \\
\vdots &  & \ddots & \vdots \\
y_{m1}  & y_{m2}  & \cdots & y_{mn} 
\end{pmatrix}.
\end{equation*}
We set $f_{ij} = y_{ij} - y_{i1} y_{1j}$ for $(i,j) \neq (1,1)$ and set
\begin{equation*}
M_1 :=  \left(\begin{array}{c|ccc}
1 & 0  & \cdots &0 \\
\hline
0  & {f_{22}}  & \cdots & f_{2n}  \\
\vdots &  & \ddots & \vdots \\
0  & f_{m2}  & \cdots & f_{mn}
\end{array} \right)
= \left( \begin{array}{cc} 
1 & 0 \\
0 & M_1' 
\end{array} \right),
\end{equation*} 
where $M_1'$ is the matrix obtained by deleting the first row and column of $M_1$. 
By \cite[Rmk.\ 4.3]{Joh03}, the set $\{ f_{ij} \mid (i,j) \ge (2,2) \} \cup \{ y_{11}, y_{1j}, y_{i1} \mid i,j > 1 \}$ is another coordinate system for the coordinate ring $\CC[ \underline{y_{ij}} ]$ of $U_{11}$. 
Under this new coornidates, $M_1$ is an $(m-1)$ by $(n-1)$ matrix of indeterminates.
Observe that for any $\ell$, $I_\ell(M) = y_{11}^\ell I_\ell(M_1) = y_{11}^\ell I_{\ell-1}(M_1')$.
Thus, 
the strict transform of $I_2(M)$ on $U_{11}$ is $V(I_2(M))^\sim|_{U_{11}} = V(I_1(M_1'))$, and this the center of the second blow-up $\pi_2$ on $U_{11}$.  
Similarly,
the strict transform of $X = V( I_r(M))$ on $U_{11}$ is $V(I_r(M))^\sim|_{U_{11}} = V(I_{r-1}(M_1'))$.
It is straightforward to see that this description does not depend on the the affine chart $U_{11}$ up to the change of the coordinates.
 
The further blow-ups are constructed iteratively on each affine charts $U_{ij}$. 
Each blow-up $\pi_i$ has the effect of dropping each index $m,n,$ and $r$ exactly by $1$, respectively. 
Thus, after $r$ successive blow-ups we reach the stated resolution in \Cref{thm:res}. 
We note that such a resolution $\pi$ in \Cref{thm:res} is a log resolution by \cite[Thm.\ 4.4]{Joh03}.
Furthermore, this log resolution is a factorizing resolution of the pair $(A,X)$ see \cite[Def. 2.6]{Niu14}.
\end{remark}


\subsection{Order and degree}
Let $E$ be a prime divisor over a smooth quasi-projective variety $A$, and let $c_A(E)$ denote the center of $E$ on $A$. 
Further, let $X$ be a closed subscheme $A$. 
The {\it order} of $X$ along $E$ is $\ord_E X := \sup \{ n \ge 0 \mid \mathcal{I}_X \subset (\mathcal{I}_{c_A(E)})^n \}$ \cite[Def.\ 9.36]{Lar04}. 
\


Now, we prove the claim we made in \Cref{rmkIndepGenLink}.

\begin{proposition}\label{thm:indendenceOfOrder}
Let $X$ be an equidimensional subscheme of an affine space $A$, and $Y_1, Y_2$ be generic links of $X$ in $A'$ and $A''$, respectively. 
Further, let $E$ be a divisor over $A$, 
 and let $E'$ and $E''$ denote their pull-backs to $A'$ and $A''$, respectively. One has
\begin{equation*}
\ord_{E'} (Y') = \ord_{E''} (Y'').
\end{equation*}
In particular, the order of the generic link of $Y$ with respect to the extension of $E$ is independent of the choice of a generic link of $X$. 
\end{proposition}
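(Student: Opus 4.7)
The plan is to combine the equivalence of generic links from Remark \ref{rmkIndepGenLink} with the fact that the order of a subscheme along a prime divisor is preserved when one adjoins auxiliary indeterminates. Write $A' = \Spec R[T_1]$ and $A'' = \Spec R[T_2]$, where $R$ is the coordinate ring of $A$ and $T_1, T_2$ are the two sets of auxiliary indeterminates used in forming $Y_1$ and $Y_2$.

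By \cite[Prop.\ 2.4]{HU85} (recalled in Remark \ref{rmkIndepGenLink}), the pairs $(R[T_1], I_{Y_1})$ and $(R[T_2], I_{Y_2})$ are equivalent: there exist further sets of indeterminates $S_1, S_2$ and an $R$-algebra isomorphism
$$\phi \colon R[T_1][S_1] \xrightarrow{\ \sim\ } R[T_2][S_2]$$
with $\phi(I_{Y_1} R[T_1][S_1]) = I_{Y_2} R[T_2][S_2]$. Let $\widehat{A}_1 = \Spec R[T_1][S_1]$ and $\widehat{A}_2 = \Spec R[T_2][S_2]$, and let $\widehat{E}_i, \widehat{Y}_i$ denote the pullbacks of $E, Y_i$ to $\widehat{A}_i$. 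Because $\phi$ is an $R$-algebra isomorphism, it identifies $\widehat{E}_1$ with $\widehat{E}_2$ (hence their centers) and, by the ideal-matching property, also identifies $\widehat{Y}_1$ with $\widehat{Y}_2$, so
$$\ord_{\widehat{E}_1} \widehat{Y}_1 = \ord_{\widehat{E}_2} \widehat{Y}_2.$$

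What remains is to show the order is unchanged by the polynomial extension from $A'$ to $\widehat{A}_1$ (and symmetrically for index $2$), i.e., $\ord_{\widehat{E}_1} \widehat{Y}_1 = \ord_{E'} Y_1$. The center $c_{\widehat{A}_1}(\widehat{E}_1)$ is the pullback of $c_{A'}(E')$ along the flat projection $\widehat{A}_1 \to A'$, so its ideal in $R[T_1][S_1]$ is $\mathcal{I}_{c_{A'}(E')} R[T_1][S_1]$, and $\mathcal{I}_{\widehat{Y}_1} = \mathcal{I}_{Y_1} R[T_1][S_1]$. Since $R[T_1] \to R[T_1][S_1]$ is faithfully flat, the containment $\mathcal{I}_{Y_1} \subset \mathcal{I}_{c_{A'}(E')}^n$ in $R[T_1]$ is equivalent to its extension in $R[T_1][S_1]$; taking the supremum over $n$ yields the equality of orders. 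Chaining with the previous step gives $\ord_{E'} Y_1 = \ord_{E''} Y_2$.

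I do not anticipate a serious obstacle. The only delicate point is that the isomorphism supplied by \cite[Prop.\ 2.4]{HU85} can be taken to be an $R$-algebra map rather than merely a $\CC$-algebra one, but this is built into the construction there: both sides are built over $R$ by adjoining auxiliary matrices of indeterminates, and the recipe for $\phi$ respects the common base ring $R$ so that pullbacks of data from $A$ are automatically matched.
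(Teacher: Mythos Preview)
Your proposal is correct and follows essentially the same strategy as the paper: both arguments rest on the fact that the equivalence of generic links from \cite[Prop.\ 2.4]{HU85} is realized by an $R$-algebra isomorphism, so that containments $I_Y \subset I^p$ for any $R$-ideal $I$ are preserved. The paper packages the algebraic content into a separate lemma (\Cref{lem:algOrd}), reducing by induction to the case of adding a single generator and writing down the explicit $R$-fixing isomorphism, whereas you invoke the general equivalence statement directly and handle the auxiliary polynomial extension via faithful flatness; the substance is the same.
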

\begin{proof}
The proof follows from \Cref{lem:algOrd} which is the algebraic counterpart.
\end{proof}

For ideals $I, J$ in a ring $R$, we set $\ord_I J := \sup \{ n \ge 0 \mid J \subset I^n \}$. 

\begin{lemma}[{cf. \cite[The proof of Prop.\ 2.4]{HU85}}]\label{lem:algOrd}
Let $R$ be a regular ring and $I_X$ an equidimensional ideal of $R$. 
Let $I_{Y}$ and $I_{Y'}$ be generic links of $I_X$ in $S$ and $S'$, respectively.
For any $R$-ideal $I$, one has that 
\begin{equation*}
\ord_{I S} I_Y = \ord_{I S'} I_{Y'}.
\end{equation*}
\end{lemma}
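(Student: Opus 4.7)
The plan is to reduce the statement to the equivalence of generic links established in \cite[Prop.~2.4]{HU85}. That result provides, after possibly adjoining finitely many additional indeterminates $T_1$ and $T_2$ to $S$ and $S'$, an $R$-algebra isomorphism $\phi \colon S[T_1] \to S'[T_2]$ carrying $I_Y S[T_1]$ onto $I_{Y'} S'[T_2]$. Once such a $\phi$ is in hand, the remaining task is purely formal: compare orders in $S$ (resp.\ $S'$) to orders in $S[T_1]$ (resp.\ $S'[T_2]$), and then transport one to the other across $\phi$.

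For the first comparison I would appeal to faithful flatness of the polynomial extension $S \to S[T_1]$. The equality $I^n S[T_1] \cap S = I^n S$ makes the containments $J \subset (IS)^n$ and $J S[T_1] \subset (I S[T_1])^n$ equivalent for every ideal $J \subset S$, so
\[
\ord_{IS} J \;=\; \ord_{I S[T_1]} J S[T_1].
\]
Applied to $J = I_Y$, and symmetrically on the $S'$-side, this reduces the lemma to the single equality
\[
\ord_{I S[T_1]} I_Y S[T_1] \;=\; \ord_{I S'[T_2]} I_{Y'} S'[T_2].
\]

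For the transport step I would use that $\phi$ is an $R$-algebra isomorphism. Since $I$ is an $R$-ideal, $\phi$ sends $I \cdot S[T_1]$ to $I \cdot S'[T_2]$, hence preserves all powers of these ideals. Combining this with $\phi(I_Y S[T_1]) = I_{Y'} S'[T_2]$ gives the displayed equality above, and chaining the three equalities yields the lemma.

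The main obstacle, and the step that really does the work, is the appeal to \cite[Prop.~2.4]{HU85}: one must verify that the isomorphism constructed there is genuinely an $R$-algebra map rather than merely a ring isomorphism. This is visible from the construction, which is built from the $R$-linear change-of-generators matrix between the two generating sequences of $I_X$ and acts only on the auxiliary indeterminates parameterizing the generic links. Beyond checking this, everything else is a formal consequence of faithful flatness.
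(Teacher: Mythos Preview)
Your proposal is correct and follows essentially the same approach as the paper: both rely on the $R$-algebra isomorphism from \cite[Prop.~2.4]{HU85} and use that it fixes $R$ to transport containments in powers of $I$. The paper spells out the HU85 construction explicitly (reducing to the case $G' = G \cup \{g\}$ and writing down the formula for $\phi$) in order to verify $R$-linearity by inspection, whereas you cite the result as a black box and make the faithful-flatness step explicit; these are cosmetic differences, not different strategies.
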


\begin{proof}
Let $G,G'$, $T,T',\underline{f_i}$, and $\underline{f_i'}$ denote the generating sets, sets of determinates, and generic complete intersections to construct the generic links of $I_Y$ and $I_{Y'}$ of $I_X$, respectively, see \Cref{sec:linkage}.
Since one can compare $G$ and $G'$ to their union $G \cup G'$, 
by induction, it suffices to show the case $G = \{ f_1,\dots, f_\ell \}$ and $G' = G \cup \{ g \}$. 
Thus $S = R[T]$ and $S' = R[T']$. 
Let $c = \codim I_X$. 
We write that $T = ( t_{ij} )$, where $1 \le i \le c, 1 \le j \le \ell$ and that $T' = ( t'_{ij} )$, where $1 \le i \le c, 1 \le j \le \ell+1$.
Since $G$ is a generating set of $I_X$ and $g \in I_X$, 
one may write $g = \sum^\ell a_i f_i$, where $a_i \in R$. 
By the proof of \cite[Prop.\ 2.4]{HU85}, the ring homomorphism
\begin{equation*}
\phi \colon S[t_{1,\ell+1},\dots,t_{c,\ell+1}] \to S',
\end{equation*}
where 
\begin{align*}
\phi(t_{i, \ell+1}) &= t'_{i, \ell+1} \\
\phi(t_{i,j}) &= t'_{i,j} + a_j t_{i,\ell+1},
\end{align*}
is an isomorphism providing the equivalence of $(S, I_Y)$ and $(S', I_{Y'})$. 
In particular, $\phi$ fixes $R$ and $\phi(I_Y) = I_Y'$. 
Since $\phi$ fixes $R$, for any $R$-ideal $I$, 
$I_Y \subset I^p$ if and only if $I_Y' \subset I^p$ for any $p \ge 0$. 
This completes the proof.
\end{proof}

Lastly, we will use the following results in linkage theory to prove \Cref{thmAxAy}. 

\begin{proposition}\label{genDegreeLem} Let $S = R[M]$, where $R$ is a commutative noetherian ring containing an infinite field, 
$M$ is an $m \times n$ matrix of intermediates with $m \ge n$, 
and $I_X = I_r(M)$. 
The ring $S$ is a standard graded ring generated by the entries of $M$ over $R$. 
Write $c = (m-r+1)(n-r+1)$ the codimension of $I_X$. Let $f_1, \dots, f_c$ be general linear combinations of the generators $I_X$, for which we may assume that $f_i$ are forms of degree $r$ and $\codim (f_1,\dots, f_c) = c$. 
Set $I_V = (f_1, \dots, f_c)$ and $I_Y = I_V : _S I_X$. We have the following statements.
\begin{enumerate}[$(a)$]
\item The graded canonical module of $S/I_X$, denoted by $\omega_{S/I_X}$, is generated in degree $(r-1)m$.

\item $\omega_{S/I_X}(mn-rc) \cong I_Y/ I_V$.

\item The ideal $I_Y/I_V$ is generated in degree $rc - m(n-r+1)$.  In particular, the ideal $I_Y$ can be generated by the elements of $I_V$ of degree $r$ and elements of degree $$rc - m(n-r+1) = (n-r+1)(m-r)(r-1).$$
\end{enumerate}
\end{proposition}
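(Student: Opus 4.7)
The plan is to prove the three parts in order, with part (a) being the most demanding since it requires invoking a specific structural fact about determinantal rings, while (b) and (c) will follow from (a) by formal linkage theory and shift arithmetic.

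For part (a), I would appeal to the well-developed theory of canonical modules of generic determinantal rings (as in Bruns--Vetter \emph{Determinantal Rings}, Ch.~8, or Bruns--Herzog \emph{Cohen--Macaulay Rings}). Since $S/I_X$ is Cohen--Macaulay of codimension $c$ (Hochster--Eagon), one has $\omega_{S/I_X} \cong \Ext^c_S(S/I_X, \omega_S)$, and the minimal generating degree can be read off from a minimal graded free resolution of $S/I_X$ (for instance, a Lascoux-type resolution, or directly from the structure of the canonical module given by a suitable product of minors). The upshot is that $\omega_{S/I_X}$ is generated in degree $(r-1)m$ for $m \geq n$; this is the content of the cited structural result. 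The main obstacle of the entire proposition lies here: one must invoke these results rather than reproduce them, since a self-contained derivation would require a nontrivial detour through determinantal representation theory.

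For part (b), I would invoke linkage theory of Peskine--Szpiro. Because $f_1,\dots,f_c$ form a regular sequence of forms of degree $r$ in $S$, the Koszul complex gives $\omega_{S/I_V} \cong (S/I_V)(rc - mn)$. Standard duality for linked ideals yields
\begin{equation*}
\omega_{S/I_X} \;\cong\; \Hom_{S/I_V}(S/I_X,\, \omega_{S/I_V}) \;\cong\; \Hom_S(S/I_X,\, S/I_V)(rc-mn).
\end{equation*}
Using the identification $\Hom_S(S/I_X, S/I_V) \cong (I_V :_S I_X)/I_V = I_Y/I_V$ of graded $S$-modules, one obtains $\omega_{S/I_X} \cong (I_Y/I_V)(rc-mn)$, which is equivalent to $\omega_{S/I_X}(mn - rc) \cong I_Y/I_V$.

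For part (c), I would simply combine (a) and (b): the shift $\omega_{S/I_X}(mn - rc)$ is generated in degree $(r-1)m - (mn - rc) = rc - m(n-r+1)$, and a straightforward factoring of this expression gives $(n-r+1)(m-r)(r-1)$. Since $I_V$ is generated by $f_1,\dots,f_c$ in degree $r$, lifting the generators of $I_Y/I_V$ to $I_Y$ produces a generating set of $I_Y$ consisting of the $f_i$ together with forms of degree $(n-r+1)(m-r)(r-1)$, as claimed.
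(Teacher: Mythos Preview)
Your proposal is correct and follows essentially the same approach as the paper: the paper handles (a) by citing \cite{BH92}, handles (b) by citing the standard linkage isomorphism in \cite{Mig98}, and deduces (c) by the identical shift computation and factorization you give. The only difference is cosmetic---you spell out the Koszul/Hom mechanism behind (b) that the paper leaves to a citation---so there is nothing to add.
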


\begin{proof}
For item (a), see \cite[Bottom of p.5]{BH92}, and item (b) follows from \cite[Prop.\ 5.2.6]{Mig98} with $rc$ and $mn-1$ for $t$ and $n$ in the proposition, respectively. 

Combining $(a)$ and $(b)$, we conclude that the ideal $I_Y/I_V$ is generated in degree 
$$(r-1)m -(mn-rc) = rc -m (n-r + 1).$$ 
Notice that with $c = (m-r+1)(n-r+1)$, we obtain 
\begin{align*}
rc -m (n-r + 1) &= r(m-r+1)(n-r+1) - m(n-r+1)\\
&= (n-r+1)(rm-r^2+r-m) \\
&= (n-r+1)(m-r)(r-1).\qedhere
\end{align*}
\end{proof}

\subsection{Summary of the proof of \Cref{mainthm1}}
\Cref{mainthm1} holds trivially when $\lct(A,X) = \codim_A X$.
Indeed, one has
\begin{equation*}
\lct(A,X) = \codim_A X = \codim_{A'} Y \geq \lct(A',Y).
\end{equation*} 
 For the general situation, our approach is to relate the order of $X$ calculated on the exceptional divisors of the log resolution of $X$ in \cite{Joh03} to the order of the generic link $Y$. 
We achieve this by extending such a log resolution $\pi \colon \tA \to A$ of $(A,X)$ to 
a log resolution $\pi' \colon \tA' \to A'$ of $(A', X')$ and compare $\ord_{E_i} (X)$ and $\ord_{E_i'} (Y)$, where $E_i'$ is the pull-back of the divisor $E_i$ to $\tA'$. 
Even though the extension $(A',X')$ of the log resolution of $(A,X)$ in \Cref{thm:res} does not extend to a log resolution of $(A',Y)$, one can find a log resolution of $(A',Y)$ extending the log resolution of $(A',X')$. 
In this log resolution, the strict transform of $E_i'$ appears in the set of exceptional divisors. 
By our definition of the log canonical threshold \Cref{def:lct}, one has $\lct (A',Y) \le  (k_i + 1)/\ord_{E_i'} (Y)$. 
In \Cref{thmAxAy}, we show that unless $\lct(A,X) = \codim (X,A)$, the value $a_{i,X}$ determining $\lct(A,X)$ satisfies $a_{i,X} =\ord_{E_i} (X ) =  \ord_{E_i'} (Y)$ which allows us to conclude equality of log canonical thresholds.

\section{Extension of the log resolution of $(A,X)$ and comparison of orders}
\label{genDegree}
We assume the notation and setting of \Cref{sec:setting}. 
In this section, we provide a closed form of $\ord_{E_i'} (Y)$ in terms of $m,n,r$ and $i$, where $E_i'$ is the pull-back of the divisor $E_i$ to $A' := A \times \Spec \CC[T]$. 
In the sequel, we will use the notation $-' := - \times_{\Spec \CC} \Spec \CC[T]$ to denote the extension of a subscheme of $A$ to $A'$.
Since $A' \to A$ is an affine extension, the log resolution of $(A,X)$ extends naturally to $A'$, and $\ord_{E_i} (X) = \ord_{E_i'} (X')$.

\begin{prop}\label{thmAxAy}
We assume the notation and setting of \Cref{thm:res}. 
Let $Y$ be a generic link of $X$ in $A'$ by $V$.
Let $m_i = m-i+1, n_i = n-i+1, r_i =r - i +1$ for $i = 1,\dots, r$. Then for $i  \in \{ 1, \dots, r\}$, 
we have
\begin{equation*}
\ord_{E_i'} (Y) = \min \{ r_i, (n_i-r_i+1)(m_i-r_i)(r_i - 1) \}.
\end{equation*}
\end{prop}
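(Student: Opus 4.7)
My plan is to prove the formula by induction on $i$, with the base case $i = 1$ handled via a direct degree computation based on \Cref{genDegreeLem}(c), and the inductive step reducing the statement for parameters $(m, n, r)$ at index $i \ge 2$ to the same statement for parameters $(m-1, n-1, r-1)$ at index $i-1$, by working on the chart $U_{11}$ of the first blow-up described in \Cref{rmk:Joh03}.

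For the base case, the divisor $E_1$ is the exceptional divisor of the simple blow-up of $A$ at the origin, so its order along a homogeneous ideal in the $M$-graded ring $S = R[T]$ equals the smallest $M$-degree of a generator of that ideal. By \Cref{genDegreeLem}(c), $I_Y$ admits homogeneous generators in $M$-degrees $r$ (the elements of $I_V$) and $(n-r+1)(m-r)(r-1)$ (the extra generators from $I_Y/I_V$), and hence $\ord_{E_1'}(Y) = \min\{r, (n-r+1)(m-r)(r-1)\}$, which matches the claimed formula at $(m_1, n_1, r_1) = (m, n, r)$.

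For the inductive step, I work on the affine chart $U_{11}$, where the matrix $M$ factors as $M = y_{11} M_1$ with $M_1 = \mathrm{diag}(1, M_1')$ for the $(m-1) \times (n-1)$ matrix of indeterminates $M_1'$, so that $I_r(M) \mathcal{O}_{U_{11}} = y_{11}^r \, I_{r-1}(M_1')$. For $i \ge 2$, the generic point of $E_i$ maps to a generic matrix of rank $i - 1$, which is visible on $U_{11}$ and lies outside $V(y_{11}) = E_1|_{U_{11}}$, so computing orders along $E_i'$ may be carried out in the flat localization $R_1[y_{11}^{-1}][T] = R[x_{11}^{-1}][T]$, where $y_{11}^r$ is a unit. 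Writing the generators of $I_X$ as $g_k = y_{11}^r \tilde g_k$ with $\{\tilde g_k\}$ a generating set of $I_{r-1}(M_1')$, and $f_j = y_{11}^r \tilde f_j$ with $\tilde f_j = \sum_k t_{jk}\tilde g_k$, the cancellation of the unit $y_{11}^r$ yields
\begin{equation*}
I_Y \cdot R_1[y_{11}^{-1}][T] = (\tilde f_1, \dots, \tilde f_c) :_{R_1[y_{11}^{-1}][T]} I_{r-1}(M_1'),
\end{equation*}
and the right-hand side is (a localization of) a generic link of $I_{r-1}(M_1')$ formed from the generating set $\{\tilde g_k\}$. By \Cref{thm:indendenceOfOrder}, the order along an extended divisor is independent of this choice. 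The iterative construction of \Cref{rmk:Joh03} identifies $E_i|_{U_{11}}$ for $i \ge 2$ with the divisor $E_{i-1}$ in the log resolution of $V(I_{r-1}(M_1'))$. Invoking the inductive hypothesis with $(m-1, n-1, r-1)$ at index $i-1$, and noting that $(m-1)_{i-1} = m_i$, $(n-1)_{i-1} = n_i$, and $(r-1)_{i-1} = r_i$, completes the step.

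The main obstacle will be the colon-ideal identification on the chart. Because $R \hookrightarrow R_1$ is not flat, the identity $I_Y R_1[T] = I_V R_1[T] :_{R_1[T]} I_X R_1[T]$ need not hold before localizing. My plan is to sidestep this by localizing at $y_{11}$, which places the computation in the flat localization $R_{x_{11}}$ and still captures the generic points of all $E_i$ for $i \ge 2$. A secondary subtlety, that the pulled-back generators $\{\tilde g_k\}$ of $I_{r-1}(M_1')$ need not form a minimal generating set, is absorbed by \Cref{thm:indendenceOfOrder}.
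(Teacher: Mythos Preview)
Your approach is correct and ultimately rests on the same two ingredients as the paper's proof—\Cref{genDegreeLem}(c) for the degrees of generators of a generic link of a determinantal ideal, and \Cref{thm:indendenceOfOrder}/\Cref{lem:algOrd} for independence of the order on the chosen generating set—but the organization is different. The paper does not induct: for each fixed $i$ it works directly on an affine chart of $A_{i-1}'$, forms a reference generic link $I_W$ of $I_{X_{i-1}} = I_{r_i}(M')$ using the $r_i\times r_i$ minors of $M'$, reads off $\ord_{I_Z} I_W$ from \Cref{genDegreeLem}(c), and then shows that the strict transform $I_{Y_{i-1}}$ is itself a generic link of $I_{X_{i-1}}$ by dividing the defining equations by $y=\prod_{\ell<i} y_\ell^{r_\ell}$ and invoking \cite[Claim~3.1.2(3)]{Niu14} to see that the $f_j/y$ remain a regular sequence with $I_{Y_{i-1}} = (f_j/y):I_{X_{i-1}}$. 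Your argument replaces this external citation by the elementary observation that $R_1[y_{11}^{-1}] = R_{x_{11}}$ is a flat localization of $R$, so the colon passes through one blow-up; iterating this is exactly what the paper accomplishes in one stroke. What your inductive packaging buys is a more self-contained proof; what the paper's direct approach buys is that one never changes the ambient polynomial ring, so the comparison via \Cref{lem:algOrd} is literally between two generic links over the same base $R_{i-1}$. In your version there is a small step you leave implicit: the inductive hypothesis is stated over $\Spec\CC[M_1']$, whereas after one blow-up you are working over $\Spec R_1 \cong \Spec\CC[M_1']\times\mathbf{A}^{m+n-1}$, and the generators $\tilde g_k$ are not pulled back from $\CC[M_1']$. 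This is harmless—apply \Cref{thm:indendenceOfOrder} with ambient space $\Spec R_1$ to pass to the generic link built from the minors of $M_1'$, which \emph{is} extended from $\CC[M_1']$, and then note that orders are unchanged under product with affine space—but it should be said.
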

\begin{proof}
Fix $i \in \{ 1, \dots, r\}$. 
Let $Z$ denote the strict transform of $V(I_i(M))$ in $A_{i-1}'$.
We have 
\begin{equation*}
\pi_{i}'^{-1} (Z) \subset  A_i' = \Bl_{Z} (A_{i-1}') \stackrel{\pi_{i}'}{\to} A_{i-1}'.
\end{equation*}
Thus, $E_i' = \pi_{i}'^{-1} (Z)$, $Z$ is smooth  in $A_{i-1}'$, and locally defined by the entries of a  $m_i \times n_i$ matrix of indeterminates. 
Furthermore, the $r_i$ by $r_i$ minors of this matrix of indeterminates locally define $X_{i-1}'$ in $A_{i-1}'$, see \Cref{rmk:Joh03}.
Note that $\ord_{E_i'} (Y) = \ord_{E_i'} ({Y_{i-1}}) = \sup \{ q \in \mathbf{N} \cup \{0\} \mid \mathcal{I}_{Y_{i-1}} \subset (\mathcal{I}_Z)^q \}$, where $(\mathcal{I}_Z)^0 := \mathcal{O}_{A_{i-1}}$.  
Thus, we need to show that $\mathcal{I}_{Y_{i-1}} \subset (\mathcal{I}_Z)^q \setminus (\mathcal{I}_Z)^{q+1}$, where $q := \min \{ r_i, (n_i-r_i+1)(m_i-r_i)(r_i - 1) \}$, and this can be checked locally. 
To this end, we choose an affine cover of $A_{i-1}$ as in \cite[Sec.\ 4.2]{Joh03} and extend it to $A_{i-1}'$.

Fix an affine chart, and let $R_{i-1}$ denote the coordinate ring of the affine chart of $A_{i-1}$ and $S_{i-1} = R_{i-1}[T]$ denote the coordinate ring of the affine chart of $A_{i-1}'$.
Let $I_{X_{i-1}'}, I_{V_{i-1}},I_{Y_{i-1}}$, and $I_{Z}$ denote the corresponding ideals of $X_{i-1}', V_{i-1}, Y_{i-1}$ and $Z$ in $S_{i-1}$. 
Further, let $I_{X_{i-1}}$ denote the corresponding ideal of $X_{i-1}$ in $R_{i-1}$. 


In $R_{i-1}$, $I_{X_{i-1}}$ can be generated by the $r_i$ by $r_i$ minors of a matrix of indeterminates. 
Let $M'$ denote this matrix of indeterminates, so that $I_{X_{i-1}} = I_{r_i}(M')$ and $I_Z = I_1(M') S_{i-1}$. 
Furthermore, the polynomial ring $R_{i-1}$ can be written as $\widetilde{R}_{i-1}[M']$ for some polynomial subring $\widetilde{R}_{i-1}$ of $R_{i-1}$. 
Let $I_W$ be the generic link of $I_{X_{i-1}}$ with respect to the generating set consisting of the $r_i$ by $r_i$ minors of $M'$.

First, we claim that $I_{W} \in I_Z^q \setminus I_Z^{q+1}$. 


Since $S_{i-1} = (\widetilde{R}_{i-1}[M'])[T] = (\widetilde{R}_{i-1}[T])[M']$, 
by writing $\widetilde{S}_{i-1} = \widetilde{R}_{i-1}[T]$, we can and will view $S_{i-1}$ as a polynomial ring over $\widetilde{S}_{i-1}$. 
Recall that $I_Z = I_1(M')$ and $I_{X_{i-1}} = I_{r_i} (M')$. 
By \Cref{genDegreeLem}(c), the ideal $I_{W}$ is generated by elements of degrees $r_i$ and $(n_i-r_i+1)(m_i-r_i)(r_i - 1)$. 
This proves the claim of $I_{W} \in I_Z^q \setminus I_Z^{q+1}$.


The above generating degree is uniform in each affine chart. 
So, to finish the proof, it suffices to show that $I_{Y_{i-1}}$ is another generic link of $I_{X_{i-1}}$ in $S_{i-1}$. 
Once we have shown this, then by \Cref{lem:algOrd}, the order of the ideals $I_{Y_{i-1}}$ and $I_W$ with respect to $I_Z$ are equal, and this completes the proof. 

Now, we claim that $I_{Y_{i-1}}$ is a generic link of $I_{X_{i-1}}$. 
Let $\Delta_1, \dots, \Delta_{\mu}$ denote the $r$ by $r$ minors of the matrix of indeterminates $M$, and let $f_1, \dots, f_c$ denote the equations defining $V$. Thus, $f_j$ are of the form
\begin{equation*}
f_j = t_{j1} \Delta_1 + \cdots + t_{j \mu} \Delta_\mu.
\end{equation*} 
For $1 \le \ell \le i-1$, let $y_\ell$ denote the local equation of the strict transforms of $V(I_i(M))$ to $A'_{i-1}$ in the chosen affine chart.
Then since $\ord_{E_\ell} (X) = \ord_{E_{\ell}'}  (V) = r_{\ell}$, 
${I}_{X_{i-1}'}$ and ${I}_{V_{i-1}'}$ are generated by the sets of equations
\begin{equation*}
\{ \Delta_1/y, \dots, \Delta_{\mu}/y \} \quad\text{and}\quad \{ f_1/y, \dots, f_c/y \},
\end{equation*} 
respectively, where $y := \prod_{\ell = 0}^{i-1} y_\ell^{r_\ell}$. 
In particular, one has
\begin{equation*}
\frac{f_j}y = t_{j1} \frac{\Delta_1}y + \cdots + t_{j \mu} \frac{\Delta_\mu}y,
\end{equation*} 
and by \cite[Claim 3.1.2 part (3)]{Niu14}, $f_1/y, \dots, f_c/y$ form a regular sequence in $S_{i-1} = R_{i-1}[T]$, and $I_{Y_{i-1}} = {I}_{V_{i-1}'} :_{S_{i-1}} {I}_{X_{i-1}'} =  {I}_{V_{i-1}'} :_{S_{i-1}} {I}_{X_{i-1}}$.
This proves the second claim and completes the proof.  
\end{proof}

\section{Proof of \Cref{mainthm1}}

The following lemma, whose proof is elementary and left to the reader, will be useful for the proof of \Cref{mainthm1}.  

\begin{lemma}\label{lem:qibound}
For fixed integers $1 \leq r \leq n \leq m$, $(n-r+1)(m-r)(r-1) < r$ if and only if
\begin{enumerate}[$(a)$]
\item $r = 1$, 
\item $m = r$, or
\item $n = r$ and $m = r + 1$.
\end{enumerate}
\end{lemma}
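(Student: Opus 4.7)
The plan is to establish the biconditional by separately verifying the two directions, each reducing to a small case analysis of the product $P(m,n,r) := (n-r+1)(m-r)(r-1)$.

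For the forward implication, I would substitute each listed hypothesis into $P(m,n,r)$ and confirm the strict inequality. In case (a) the factor $r-1$ vanishes, so $P = 0 < r$; in case (b) the factor $m-r$ vanishes, so again $P = 0 < r$; in case (c) the product collapses to $1 \cdot 1 \cdot (r-1) = r-1$, which is strictly less than $r$.

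For the reverse direction, I would assume that none of (a), (b), (c) holds, i.e.\ $r \ge 2$, $m \ge r+1$, and the pair $(n,m)$ is not $(r,r+1)$, and then deduce $P(m,n,r) \ge r$. I would split on whether $n = r$ or $n \ge r+1$. In the first case, the exclusion of (c) forces $m \ge r+2$, giving $P = (m-r)(r-1) \ge 2(r-1) \ge r$, where the last inequality uses $r \ge 2$. In the second case, the three factors satisfy $n - r + 1 \ge 2$, $m - r \ge 1$, and $r - 1 \ge 1$, so again $P \ge 2(r-1) \ge r$.

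The argument is entirely elementary; the only step requiring any care is ensuring that the constraints $1 \le r \le n \le m$ together with the exclusion of (a), (b), (c) really do cover every remaining configuration, which the above dichotomy makes transparent. I do not anticipate any real obstacle, and the whole proof fits in a handful of lines.
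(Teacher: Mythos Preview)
Your argument is correct; the only quibble is that you have the labels ``forward'' and ``reverse'' swapped relative to the statement as written (the implication $P<r \Rightarrow$ (a)/(b)/(c) is the forward one, and you prove it by contraposition), but this is purely cosmetic and the mathematics is fine. The paper itself does not supply a proof of this lemma---it explicitly calls it elementary and leaves it to the reader---so your short case analysis is precisely what is intended.
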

In particular, for $i = 1,\dots, r-1$, set $m_i = m - i + 1, n_i = n-i+1, r_i = r-i+1$, then 
\begin{equation*}
(n-r+1)(m-r)(r-1) < r \iff (n_i-r_i+1)(m_i-r)(r_i-1) < r_i.
\end{equation*}

Note that case (b) is the determinant of a square matrix, and case (c) is the maximal minors of $m$ by $m-1$ matrices. 
In case (c), the generic link is also determinantal of size $m+1$ by $m$. 

\begin{prop}[{cf. \cite[Theorem 6.4]{Joh03}}]\label{proplctC}
Let $X = \Spec \CC[M]/I_r(M)$ be the variety defined by the $r \times r$-minors of an $m \times n$-matrix $M$ of indeterminates, and let $Y$ be the generic link of $X$. 
If $(a)~r = 1, (b)~m=r$, or $(c)~n = r$ and $m = r+1$, i.e., the three cases in \Cref{lem:qibound}, then $\lct (A,X) = \codim_A X$. 
In particular, if $(n-r+1)(m-r)(r-1) < r$, then $\lct (A,X) = \lct (A',Y)$.
\end{prop}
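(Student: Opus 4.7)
The plan is to reduce the first assertion to a direct computation using the closed formula \Cref{detLCTformula} of \Cref{thm:res}, and then deduce the ``in particular'' clause from the standard codimension bound on log canonical thresholds combined with the general inequality \Cref{eqEq}.

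Writing $g(t) := (m-t)(n-t)/(r-t)$, so that $\lct(A,X) = \min_{0 \le t \le r-1} g(t)$ and $\codim_A X = (m-r+1)(n-r+1)$, I would treat each of the three cases separately. In (a) only $t=0$ is admissible. In (b) the standing hypothesis $m \ge n \ge r$ together with $m = r$ forces $m = n = r$, and $g(t) = r-t$ is linear in $t$. In (c) the factor $r-t$ cancels and $g(t) = r+1-t$ is again linear. In each case the minimum of $g$ over $\{0,\dots,r-1\}$ evaluates by inspection to the corresponding codimension ($mn$, $1$, and $2$ respectively), so $\lct(A,X) = \codim_A X$ in each instance.

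For the final sentence, \Cref{lem:qibound} identifies the numerical hypothesis $(n-r+1)(m-r)(r-1) < r$ with exactly the disjunction of cases (a)--(c), so the first part of the proposition applies. Since $X$ and $Y$ are linked they have the same codimension in their respective ambient affine spaces, and the standard bound $\lct(A',Y) \le \codim_{A'} Y$ (see e.g.\ \cite[Ex.\ 9.2.14 and Prop.\ 9.2.32(a)]{Lar04} or \cite[Lem.\ 2.5]{Niu14}) combined with \Cref{eqEq} produces the sandwich
\begin{equation*}
\codim_A X = \lct(A,X) \le \lct(A',Y) \le \codim_{A'} Y = \codim_A X,
\end{equation*}
forcing equality throughout.

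The main obstacle is essentially nonexistent: the whole argument is routine, requiring only \Cref{detLCTformula}, the codimension upper bound on the log canonical threshold, and the numerical characterization in \Cref{lem:qibound}. The only subtlety is remembering to apply the standing hypothesis $m \ge n \ge r$ in case (b), which collapses the situation to the square-matrix hypersurface $\det M = 0$ rather than to a genuinely rectangular family.
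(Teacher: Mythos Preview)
Your proposal is correct and follows essentially the same approach as the paper: the paper's proof is the single sentence ``Apply the formula in Equation~\eqref{detLCTformula} in \Cref{thm:res} with \Cref{lem:qibound},'' and your case-by-case evaluation of $g(t)$ together with the sandwich argument (which the paper records earlier, in Section~\ref{sec:setting}'s summary and in \Cref{xmp:codim2}) is precisely what this sentence unpacks to.
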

\begin{proof}
Apply the formula, in Equation~\eqref{detLCTformula} in \Cref{thm:res} with \Cref{lem:qibound}.
\end{proof}

Now, we are ready to proof our main theorem.
\begin{proof}[Proof of \Cref{mainthm1}:] 
We assume the notation and setting of \Cref{thm:res}. 

If $(n-r+1)(m-r)(r-1) < r$, then we are done by \Cref{proplctC}. 
Without loss of generality, we may assume that $(n-r+1)(m-r)(r-1) \ge r$. 

Suppose $a_{i,X}$ with $1 \leq i \leq r$ is the order computing the log canonical threshold of $X$. 
That is $\lct(A,X) = \frac{k_i + 1}{a_{i,X}}$. 
Once we have shown that 
\begin{align*}
\lct (A,X) = \codim_A X \qquad &\text{if}~ i = r ~\text{and} \\
\ord_{E_i} (X) \le \ord_{E_i'} (Y) \qquad &\text{otherwise}, 
\end{align*}
the equality $\lct (A,X) = \lct (A',Y)$ follows by \Cref{eqEq}. 
If $i = r$, then $\lct(A,X) = \codim_A X$. Hence $\lct(A',Y) = \lct (A,X)$ by \Cref{eqEq}. 
Without loss of generality assume $1 \leq i < r$. 
In this range of $i$, by \Cref{lem:qibound}, we have 
\begin{equation*} 
(n-r+1)(m-r)(r-1) \ge r \iff (n_i-r+1)(m_i-r)(r_i-1) \ge r_i, 
\end{equation*} 
where $m_i = m-i+1, n_i =n-i+1$, and $r_i = r-i+1$.
Therefore, by \Cref{thmAxAy}, we conclude that 
$\ord_{E_i'}(Y) =  r_i.$
\
Since $\ord_{E_i'} (X) = r_i$, this completes the proof. 
\end{proof}

\begin{cor}
In the setting of \Cref{mainthm1}, if $(n-r+1)(m-r)(r-1) \ge r$, then $\ord_{E_r'} (Y) =0$ and $\ord_{E_i} (X) = \ord_{E_i'} (Y)$ for $i = 1,\dots, r-1$. 
\end{cor}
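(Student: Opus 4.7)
The plan is to read both claims off the closed formula
\[\ord_{E_i'}(Y) = \min\{r_i,\, (n_i - r_i + 1)(m_i - r_i)(r_i - 1)\}\]
supplied by \Cref{thmAxAy}, with \Cref{lem:qibound} controlling which branch of the minimum is active in the range $1 \leq i \leq r - 1$.

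For the first claim $\ord_{E_r'}(Y) = 0$, I would specialize \Cref{thmAxAy} at $i = r$. Since $r_r = r - r + 1 = 1$, the factor $r_r - 1$ vanishes in the second entry of the minimum, and the formula collapses to $\min\{1, 0\} = 0$.

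For the second claim, fix $i \in \{1, \ldots, r - 1\}$. By \Cref{thm:res} together with the affine-extension remark immediately preceding \Cref{thmAxAy}, $\ord_{E_i}(X) = \ord_{E_i'}(X') = r_i$, so it suffices to check that the second argument of the displayed minimum is at least $r_i$. This is precisely the contrapositive of the ``in particular'' clause following \Cref{lem:qibound}: the hypothesis $(n - r + 1)(m - r)(r - 1) \geq r$ transfers under the index shift to $(n_i - r_i + 1)(m_i - r_i)(r_i - 1) \geq r_i$ for every $i \leq r - 1$, so \Cref{thmAxAy} yields $\ord_{E_i'}(Y) = r_i$ as required. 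The argument is essentially bookkeeping with the shifted parameters $(m_i, n_i, r_i)$ and the two preceding results, and no substantive obstacle arises.
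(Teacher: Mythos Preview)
Your argument is correct. The treatment of $i=1,\dots,r-1$ matches the paper's: both simply invoke what was already shown in the proof of \Cref{mainthm1}, namely that under the hypothesis the formula of \Cref{thmAxAy} together with \Cref{lem:qibound} forces $\ord_{E_i'}(Y)=r_i=\ord_{E_i}(X)$.

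The handling of $\ord_{E_r'}(Y)=0$, however, is genuinely different. You specialize the formula of \Cref{thmAxAy} at $i=r$ and observe that the factor $r_r-1$ vanishes, giving $\min\{1,0\}=0$; this is valid since \Cref{thmAxAy} is stated for all $i\in\{1,\dots,r\}$. The paper instead argues geometrically: after the first $r-1$ blow-ups the strict transform $X_r'$ is smooth and coincides with the center of $\pi_r$, and since a generic link is \emph{geometrically} linked to $X$ one has $X_r'\not\subset Y_r$, whence $I_Y\cO_{X_r'}=\cO_{X_r'}$ and the order is zero. Your route is shorter and purely formal, while the paper's route explains \emph{why} the order drops to zero at the last step---the generic link has no component along the smooth locus of $X$---a conceptual point that the bare formula obscures.
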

\begin{proof}
By the proof of \Cref{mainthm1}, it suffices to show the assertion $\ord_{E_r'} (Y) = 0$. 
Notice that $X_r'$ is smooth , and it is the center of the blow-up $\pi_r$. 
Since $Y_r$ and $X_r'$ are geometrically linked, $X_r' \not\subset Y_r$. 
This shows that $I_Y \cO_{X_r'} = \cO_{X_r'}$, hence $\ord_{E_r'} (Y) = 0$. 
\end{proof}

\begin{remark}[{cf.\ \Cref{thmAxAy}}]
The reader should be warned not to jump to the incorrect conclusion that one has 
$\ord_{E_i}(X) = \ord_{E_i'} (Y)$ in general. 
Let 
\begin{equation*}
M = \begin{pmatrix} x_{11} & x_{12} \\
x_{21} & x_{22} \\
x_{31} & x_{32} \\ \end{pmatrix}, \qquad
T = \begin{pmatrix}
t_{11} & t_{12} & t_{13} \\ 
t_{21} & t_{22} & t_{23}
\end{pmatrix},
\end{equation*}
and $\Delta_i$ denote the signed minor of $M$ after deleting the $i$th row for $i = 1,2,3$. 
Then $I_V$ is generated by two elements 
\begin{equation*}
v_1 = t_{11}\Delta_1 + t_{12}\Delta_2 + t_{13} \Delta_3, \quad v_2 = t_{21} \Delta_1 +  t_{22}\Delta_2 + t_{23} \Delta_3,
\end{equation*}
and $I_Y = I_V : I_X$ is generated by the $3 \times 3$ minors of the matrix
\begin{equation*}
\begin{pmatrix} x_{11} & x_{21} & x_{31} \\
 x_{12} & x_{22} & x_{32} \\ 
t_{11} & t_{12} & t_{13} \\ 
t_{21} & t_{22} & t_{23}
\end{pmatrix}.
\end{equation*}
Since the ideal generated by the variables in $x_{i,j}$, say $\m$, is the center of the blow up
\begin{equation*}
\pi_1 : A_1 := \Bl_{V(I_1(M))} (A) \to A := \Spec \CC[M,T],
\end{equation*}
 and $I_Y \in \m \setminus \m^2$.
Thus $\ord_{E_i} (X) = 2$, but $\ord_{E_i'} (Y) = 1$. 
\end{remark}
\

\begin{remarkQuestion}~We conclude with a few quesitons.
\begin{enumerate}[$(a)$]
\item The generic link we use in the paper is called a 1st generic link. One can construct higher generic links iteratively by taking a generic link of a generic link, and etc. These generic links were used heavily in the study of the structure of licci ideals in \cite{HU87}. 

A natural question to ask is for a variety $X$ and its generic link $Y$, if the log canonical threshold of $X$ and $Y$ are equal, then are the log canonical threshold of higher generic links equal to that of $X$ and $Y$?

A forth-coming paper of the first author will address this question. 

\item 
In general, it is difficult to determine the structure of a generic link. 
One class for which we have an explicit description of a generic link is the class of complete intersections. 
Suppose $X$ is a complete intersection subscheme of a smooth quasi-projective scheme $A$, and $Y$ is its generic link in $A'$.
In general, inequality $\lct (A',Y) \ge \lct (A,X)$ can be strict, see \Cref{xmp:codim2}.
Even though we have explicit descriptions of generic links, the equations defining them become far more complicated, making the study of its log canonical threshold challenging. 
Thus, it will be interesting to find a subclass of the class of complete intersections where inequality in \Cref{eqEq} is either equal or a strict inequality. 
\end{enumerate}
\end{remarkQuestion}

\end{document}